\newcommand{\TODO}[1]{}
\newtheorem{theorem}{Theorem}
\theoremstyle{remark}
\let\leq\leqslant
\let\geq\geqslant
\let\setminus\smallsetminus
\let\epsilon\varepsilon
\let\epsi\varepsilon
\def\calG{\mathcal{G}}
\def\calC{\mathcal{C}}
\DeclareMathOperator{\lev}{lev}
\let\ts\undefined
\DeclareMathOperator{\ts}{ts}
\DeclareMathOperator{\sel}{sel}
\newcommand{\brac}[1]{{\left(#1\right)}}
\newcommand{\set}[1]{\left\{#1\right\}}
\newcommand{\norm}[1]{{\left|#1\right|}}
\newcommand{\floor}[1]{{\left\lfloor #1 \right\rfloor}}
\newcommand{\ceil}[1]{{\left\lceil #1 \right\rceil}}
\let\O\undefined
\newcommand{\O}[1]{O\left(#1\right)}
\newcommand{\Om}[1]{\Omega\left(#1\right)}
\newcommand{\Rule}[1]{\textbf{Rule #1:}}
\let\old@setaddresses\@setaddresses
\def\@setaddresses{\bgroup\parindent 0pt\let\scshape\relax\old@setaddresses\egroup}
\begin{document}

\title{Lower Bounds for On-line Graph Colorings}

\author[G.~Gutowski]{Grzegorz Gutowski}
\author[J.~Kozik]{Jakub Kozik}
\author[P.~Micek]{Piotr Micek}
\author[X.~Zhu]{Xuding Zhu}

\address[G.~Gutowski, J.~Kozik, P.~Micek]{Theoretical Computer Science Department, Faculty of Mathematics and Computer Science, Jagiellonian University, Krak\'{o}w, Poland}
\email{\mtt\{gutowski,jkozik,micek\mtt\}@tcs.uj.edu.pl}

\address[X.~Zhu]{Department of Mathematics, Zhejiang Normal University, China, and Department of Applied Mathematics, National Sun Yat-sen University, Taiwan}
\email{xdzhu@zjnu.edu.cn}

\thanks{This research is supported by: Polish National Science Center UMO-2011/03/D/ST6/01370.}

\begin{abstract}
We propose two strategies for Presenter in on-line graph coloring games.
The first one constructs bipartite graphs and forces any on-line coloring algorithm to use $2\log_2 n - 10$ colors, where $n$ is the number of vertices in the constructed graph.
This is best possible up to an additive constant.
The second strategy constructs graphs that contain neither $C_3$ nor $C_5$ as a subgraph and forces $\Omega(\frac{n}{\log n}^\frac{1}{3})$ colors.
The best known on-line coloring algorithm for these graphs uses $O(n^{\frac{1}{2}})$ colors.
\end{abstract}

\maketitle

\section{Introduction}

A \emph{proper coloring} of a graph $G$ is an assignment of colors to the vertices of the graph such that adjacent vertices receive distinct colors.
An $n$-round \emph{on-line coloring game} on a class of graphs $\calG$ is a two-person game, played by Presenter and Algorithm.
In each round Presenter introduces a new vertex of a graph with its adjacency status to all vertices presented earlier.
The only restriction for Presenter is that in every moment the currently presented graph is in $\calG$.
Algorithm assigns colors to the incoming vertices in such a way that the coloring of the presented graph is always proper.
The color for a new vertex has to be assigned before Presenter introduces the next vertex.
The assignment is irrevocable.
The goal of Algorithm is to minimize the number of different colors used during the game.

Throughout the paper $\log$ and $\ln$ are logarithm functions to base $2$ and $e$, respectively.
By the \emph{size} of a graph we mean the number of vertices in the graph.

For most classes of graphs the number of colors necessary in the corresponding on-line coloring game can not be bounded in terms of the chromatic number of the constructed graph.
Rare examples of classes where it is possible include interval graphs~\cite{KiersteadT81}, more generally cocomparability graphs~\cite{KiersteadPT94} or $P_5$-free graphs~\cite{KiersteadPT95}.
All of these results are covered by the main result of~\cite{KiersteadPT94} that says that for any tree $T$ with radius $2$, 
the class of graphs that do not contain an induced copy of $T$ can be colored on-line with number of colors being a function of $T$ and chromatic number of presented graph.

Usually, for general enough classes of graphs, the best one can hope for is to bound the number of colors used in an on-line coloring game in terms of the number of rounds (i.e. the size of the constructed graph).

It is a popular exercise to show a strategy for Presenter that constructs forests of size $n$ and forces Algorithm to use at least $\floor{\log n}+1$ colors.
On the other hand, \emph{First-Fit strategy} for Algorithm (that is a strategy that colors each incoming vertex with the least admissible natural number), uses at most $\floor{\log n}+1$ colors on forests of size $n$.

When the game is played on bipartite graphs, Presenter can easily trick out First-Fit strategy and force $\ceil{\frac{n}{2}}$ colors on a bipartite graph of size $n$.
Lov{\'a}sz, Saks and Trotter~\cite{LovaszST89} gave a simple strategy for Algorithm using at most $2\log n$ colors on bipartite graphs of size $n$.
Recently, Bianchi et al.~\cite{BianchiBHK14} proposed a strategy for Presenter that forces Algorithm to use at least $\floor{1.13746\log n - 0.49887}$ colors on bipartite graphs of size $n$.
We improve this bound to $2\log n - 10$, which matches an upper bound from~\cite{LovaszST89} up to an additive constant.

\begin{theorem}\label{thm:bipartite}
There exists a strategy for Presenter that forces at least $2\log_2 n - 10$ colors on bipartite graphs of size $n$.
\end{theorem}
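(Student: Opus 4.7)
I will describe a Presenter strategy $\sigma_k$ that constructs a bipartite graph of size $n(k)$ forcing Algorithm to use at least $k$ colors, and then prove $n(k) \le 2^{k/2+5}$; setting $k=\lfloor 2\log_2 n - 10\rfloor$ then gives $n(k)\le n$, which is exactly the statement. Small explicit bipartite graphs handle the base cases (a single edge already forces two colors). The work is in the recursive step, which must pass from $\sigma_{k-2}$ to $\sigma_k$ at the cost of roughly doubling the vertex count. This ``one doubling per two new colors'' is what produces the factor $2$ in the $2\log n$ lower bound, as opposed to the easy factor $1$ one gets from the forest construction.

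\textbf{Recursive step.} Given $\sigma_{k-2}$, Presenter plays two vertex-disjoint copies of it in sequence, producing bipartite subgraphs $G_1, G_2$ with no edges between them. Let $C_i$ be the set of colors Algorithm uses on $G_i$; each has size at least $k-2$. If $|C_1 \cup C_2| \ge k$, Presenter is already done. Otherwise $|C_1 \cap C_2| \ge k-3$, so both copies are colored almost identically, and Presenter now appends a bounded number of ``connector'' vertices in order to force two more colors. A connector on the left is made adjacent to one representative right-side vertex of each color currently used on the right, and symmetrically for a connector on the right; thus any connector must either receive a color never previously used (so one new color has been gained), or it must reuse a color that appears only on its own side so far. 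By playing connectors on both sides, Presenter can always force at least one of the two scenarios to supply the two additional colors needed.

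\textbf{Main obstacle and bookkeeping.} The delicate point is the connector analysis: in a bipartite graph no single new vertex can be adjacent to witnesses of every color used so far, because it is confined to one side of the bipartition. Consequently Presenter must coordinate connectors on both sides and must strengthen the inductive invariant to track the color palette on the left and on the right \emph{separately}, rather than merely recording the total number of colors used. Once this richer invariant is maintained through both the parallel execution of $\sigma_{k-2}$ and the connector phase, the recurrence $n(k) \le 2\,n(k-2) + O(1)$ solves to $n(k) \le C\cdot 2^{k/2}$; optimising the constants in the base cases and in the $O(1)$ per-step connector overhead produces $C = 2^5$, and hence the additive $-10$ promised in the statement.
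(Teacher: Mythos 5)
Your connector step is where the proof breaks. The inductive hypothesis ``$\sigma_{k-2}$ forces at least $k-2$ colors'' says nothing about how those colors are distributed between the two sides of the bipartition, and that distribution is exactly what determines whether a connector can be forced to take a new color. Concretely, suppose both copies end up colored with the same set $C$ of $k-2$ colors, where some color $\alpha$ appears only on the left side of $G_1$ and only on the left side of $G_2$, some color $\beta$ appears only on the right side of $G_1$ and only on the left side of $G_2$, and all other colors appear on both sides of both copies. Then for either way of aligning the sides of $G_2$ with those of $G_1$, exactly one of your two connectors sees all of $C$ in its neighborhood (and is forced to a new color $\gamma$), while the other sees only $C\setminus\set{\alpha}$ or $C\setminus\set{\beta}$ and can recycle the missing color. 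Worse, the configuration is stable: after the merge, $\alpha$ still appears on only one side of the (now single) component and $\gamma$ on only the other, so every further vertex attached to this component can be colored from $C\cup\set{\gamma}$, i.e.\ with $k-1$ colors, no matter how many more connectors you add. So the recursion does not close, and no $O(1)$ patch to the connector phase fixes it; you would have to prove a structural invariant about one-sided versus two-sided colors strong enough to exclude such configurations, which is the actual content of the theorem.

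This is also where your approach diverges from the paper's. The paper does use your basic merge gadget (two adjacent vertices $a,b$ joined to opposite sides), but it abandons the binary ``two copies then finish'' recursion. Instead it maintains a whole population of components, uses as potential the number of \emph{two-sided} colors (the ``level'') of each component, and keeps for each component two selected vertices with one-sided colors. Progress is guaranteed not by a single forced merge but by a case analysis (Rules 1--3) showing that whenever too many components share a level, some merge increases the level by at least $2$ while only adding the sizes; Rule 3 in particular merges an arbitrarily long ``cycle'' of components whose selected color pairs close up, which is precisely the move needed to kill stubborn one-sided colors like the $\alpha$ above. The size bound then comes from showing at most $c-\ell-2$ components of each level $\ell$ can coexist, summing $\sum_\ell (c-\ell-2)\,2^{\ell/2+2}=O(2^{c/2})$ --- so your ``one doubling per two colors'' intuition is correct in aggregate, but realizing it requires this multi-component bookkeeping rather than a two-branch recursion.
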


Perhaps, the most exciting open problem in the area is whether there is a strategy for Algorithm using $O(n^{1-\epsi})$ colors on triangle-free graphs of size $n$, for some $\epsi>0$.
The only non-trivial on-line algorithm for triangle-free graphs, given by Lov{\'a}sz et al.~\cite{LovaszST89}, uses $O(\frac{n}{\log\log n})$ colors on graphs of size $n$.
Triangle-free graphs may have arbitrarily large chromatic number, 
but the chromatic number in this case has a precise bound in terms of the number of vertices.
Ajtai, Koml{\'o}s and Szemer{\'e}di~\cite{AjtaiKS80} proved that the chromatic number of any triangle-free graph of size $n$ is $\O{\frac{n}{\log n}^\frac{1}{2}}$.
Kim~\cite{Kim95} presented a probabilistic construction of triangle-free graphs with chromatic number $\Om{\frac{n}{\log n}^\frac{1}{2}}$.

The \emph{girth} of a graph $G$ is the length of the shortest cycle in $G$.
The \emph{odd-girth} of $G$ is the length of the shortest odd cycle in $G$.
Thus, triangle-free graphs have odd-girth at least $5$.
For bipartite graphs the odd-girth is not defined and for convenience it is set to $\infty$.
For graphs with odd-girth at least $7$, i.e.~$C_3$- and $C_5$-free, Kierstead~\cite{Kierstead98} gave a strategy for Algorithm that uses $\O{n^\frac{1}{2}}$ colors on such graphs of size $n$.
Curiously, no better strategy for Algorithm is known even for classes of graphs with odd-girth larger than any $g \geq 7$.
On the off-line side, Denley~\cite{Denley94} has shown that the chromatic number of graphs of size $n$ with odd-girth at least $g\geq7$ is $\O{\frac{n}{\log n}^\frac{2}{g-1}}$.
For the lower bound, it is well-known (see Lemma~6.1 in Krivelevich~\cite{Krivelevich97}) that there are graphs of size $n$ and with girth at least $g$ with chromatic number $\Om{n^\frac{1}{g-2}}$.

For a good introduction to our second result, we present a simple strategy for Presetner by Diwan, Kenkre and Vishwanathan~\cite{DiwanKV13} that forces Algorithm to use $\Om{n^\frac{1}{2}}$ colors on triangle-free graphs of size $n$.
Note that it improves the bound that trivially follows from the non-trivial off-line construction only by a logarithmic factor.
For the case of graphs with odd-girth at least $7$ we propose a strategy for Presenter that forces Algorithm to use $\Om{\frac{n}{\log n}^\frac{1}{3}}$ colors.

\begin{theorem}\label{thm:no-C3-and-C5}
There exists a strategy for Presenter that forces $\Om{\frac{n}{\log n}^\frac{1}{3}}$ colors on graphs of size $n$ with odd-girth at least $7$.
\end{theorem}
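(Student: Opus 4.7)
My plan is to adapt the strategy of Diwan, Kenkre and Vishwanathan for triangle-free graphs to the setting of odd-girth at least $7$, building the $C_3, C_5$-free graph inductively in $k$ stages so that by the end of stage $i$ Algorithm has been forced to use at least $i$ distinct colors. Writing $n_i$ for the number of vertices introduced in stage $i$, the budget target is $\sum_{i=1}^{k} n_i = \O{k^3 \log k}$, which inverts to $k = \Om{\brac{n/\log n}^{1/3}}$.

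The key ingredient should be a stage lemma: starting from a $C_3, C_5$-free graph on which Algorithm has already used $i-1$ colors, Presenter can introduce $\O{i^2 \log i}$ further vertices while preserving odd-girth at least $7$ and force Algorithm to reveal an $i$-th color before the stage ends. The natural invariant to maintain is a \emph{menu}: a set $M$ of $i-1$ pairwise non-adjacent vertices, one per color, whose open neighborhoods are pairwise disjoint. These two conditions ensure exactly that a single fresh vertex joined to all of $M$ can close neither a $C_3$ (by independence) nor a $C_5$ (by neighborhood disjointness), so Algorithm has no option but to assign that vertex a new color.

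The real work goes into producing such a menu within budget. Here I would invoke the bipartite strategy of Theorem~\ref{thm:bipartite} as a subroutine: for each of the $i-1$ established colors, build a small bipartite auxiliary gadget attached to vertices of that color, structured so that Presenter can always locate a representative whose open neighborhood is disjoint from those of the representatives chosen for the remaining colors. Since bipartite graphs have infinite odd-girth, such gadgets are automatically $C_3, C_5$-free and can be glued in without closing a short odd cycle, provided the attachment respects distances. The per-color overhead is $\O{i \log i}$, arising from the bipartite construction together with the need to keep representatives structurally separated; summing over the $i-1$ colors gives the claimed $\O{i^2 \log i}$ per stage.

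The principal obstacle I anticipate is the $C_5$-free constraint. In the triangle-free case it suffices to maintain an independent menu, a combinatorially mild invariant; the additional requirement that any two menu vertices have disjoint open neighborhoods is much more restrictive, since each vertex added in earlier stages already lies in the neighborhoods of several existing vertices. Keeping this invariant robust against Algorithm's adaptive choices is exactly where the bipartite modules of Theorem~\ref{thm:bipartite} are expected to do the real work: by supplying structurally many copies of each color class, they give Presenter enough flexibility to always select well-separated representatives regardless of how Algorithm has colored so far.
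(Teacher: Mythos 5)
Your overall budget $\sum_i \O{i^2\log i}=\O{k^3\log k}$ matches the paper's count, but the central structural claim of your stage lemma is false. A $C_5$ through a fresh apex $v$ joined to a menu $M$ consists of $v$ together with a path of length three between two menu vertices, say $m_1$--$a$--$b$--$m_2$. Your invariant --- $M$ independent with pairwise disjoint open neighborhoods --- forbids the edge $m_1m_2$ (which would give a $C_3$) and forbids a common neighbor $a=b$ (which would only give a harmless $C_4$ anyway), but it does nothing to exclude distinct adjacent vertices $a\in N(m_1)$, $b\in N(m_2)$. So the apex can close a $C_5$ even when your invariant holds; what you actually need is that no two menu vertices are joined by a path of length three, a far more delicate property, especially since every apex you add creates many new short paths among previously presented vertices. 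A second, independent gap is that the menu construction is only asserted: Algorithm, not Presenter, decides which vertices receive which colors, so ``build a gadget attached to vertices of that color'' is not an operation Presenter controls, and gluing a bipartite gadget onto an existing non-bipartite graph can create short odd cycles through the attachment vertices even though the gadget alone has infinite odd-girth.

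The paper circumvents both difficulties with a different device. It keeps a table with $c$ rows (one per color) and $3c$ columns, and every new vertex is introduced with its entire neighborhood contained in a single column, after which the vertex itself is filed strictly to the right of that column or discarded. This column discipline makes each column independent (no $C_3$) and excludes $C_5$ by a short case analysis on the leftmost vertex of a hypothetical $5$-cycle. The quantitative part is not a per-color representative argument at all but a weighting scheme over ``fans'' of $3c\brac{1+\ceil{\ln 3c}}$ vertices: if Algorithm never reaches $c$ colors, the total weight deposited into the table is at least $9c^3\brac{1+\ceil{\ln 3c}}$, while each of the $c$ rows can absorb weight less than $9c^2\brac{1+\ln 3c}$, a contradiction. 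If you want to rescue your stage-by-stage outline, you would have to replace ``disjoint open neighborhoods'' by ``no path of length three between menu vertices'' and supply an adaptive mechanism for maintaining that; the paper's column order is essentially a global way of guaranteeing exactly this property for free.
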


\section{Bipartite graphs}\label{sec:bipartite}

\begin{proof}[Proof of Theorem \ref{thm:bipartite}]
We give a strategy for Presenter that forces Algorithm to use $c$ different colors on bipartite graphs of size $\brac{8+7\sqrt{2}}2^{\frac{c}{2}}$.
Thus, Presenter can force $\floor{2\log n - 2\log(8+7\sqrt{2})} \geq \floor{2\log n - 8.32}$ colors on bipartite graphs of size $n$.

At any moment during the game the presented graph is bipartite and consists of a number of connected components.
Each component has the unique bipartition into two independent sets which we call the \emph{sides} of the component.
A color $\alpha$ is \emph{one-sided} in a component $C$ if there is a vertex in $C$ colored with $\alpha$ but only in one out of the two sides of $C$.
A color $\alpha$ is \emph{two-sided} in $C$ if there are vertices in both sides of $C$ colored with $\alpha$.
The set of two-sided colors in a component $C$ is denoted by $\ts(C)$.
The \emph{level} of a component $C$, denoted by $\lev(C)$, is the number of two-sided colors in $C$.

The strategy is divided into phases in which carefully chosen components are merged or a new component being a single edge is introduced.
After each phase, for each presented component $C$ the strategy maintains two \emph{selected} vertices in opposite sides of $C$ colored with one-sided colors.
The two-element set of colors assigned to the selected vertices of a component $C$ is denoted by $\sel(C)$.

Consider a single phase of the strategy.
If a new component $C$, which is always a single edge, is introduced then $C$ has no two-sided colors and both vertices of $C$ are selected.
If the strategy \emph{merges} components $C_1,\ldots,C_k$, all with the same level, then for every $C_i$ the strategy fixes one side of $C_i$ to be the left side, and the other to be the right side.
After that, two adjacent vertices $a$ and $b$ are introduced.
The vertex $a$ is adjacent to all the vertices in the left sides of components $C_1,\ldots,C_k$ and $b$ is adjacent to all the vertices in the right sides.
Let $C$ be the component created by this merge.
The colors assigned to $a$ and $b$ are one-sided colors in $C$ and the strategy chooses $a$ and $b$ to be selected for $C$.
Observe that $\ts(C) = \bigcup_{i=1}^k \ts(C_i) \cup X$, where a color $\alpha$ is in $X$ if there are different $C_i$ and $C_j$ such that $\alpha$ is one-sided in both $C_i$ and $C_j$, $\alpha$ appears on a vertex in the left side of $C_i$ and on a vertex in the right side of $C_j$.

A single phase is described by the following four rules.
For each phase the strategy uses the first applicable rule.
\begin{enumerate}
\item \label{rul:dif} \textbf{Merge Different.}
  If there are two components $C_1$, $C_2$ with the same level and $\norm{\ts(C_1) \setminus \ts(C_2)} \geq 2$,
  then merge those two components into a new component $C$.
  Note that $\lev(C) \geq \lev(C_1) + 2$ and $\norm{C} = \norm{C_1} + \norm{C_2} + 2$.
\item \label{rul:sim} \textbf{Merge Similar.}
  If there are two components $C_1$, $C_2$ with the same level and $\norm{\ts(C_1) \setminus \ts(C_2)} = 1$ and $\sel(C_1) \cap \sel(C_2) \neq \emptyset$,
  then merge those two components into a new component $C$ in such a way that a common one-sided color becomes two-sided in $C$.
  Note that $\lev(C) \geq \lev(C_1) + 2$ and $\norm{C} = \norm{C_1} + \norm{C_2} + 2$.
\item \label{rul:cyc} \textbf{Merge Equal.}
  If there are $k \geq 2$ components $C_1, \ldots, C_k$ with the same level and $\ts(C_1) = \ldots = \ts(C_k)$
  and there are $k$ distinct colors $\alpha_1, \ldots, \alpha_k$ such that
  $\sel(C_1) = \set{\alpha_1, \alpha_2}$, $\sel(C_2) = \set{\alpha_2, \alpha_3}$, $\ldots$, $\sel(C_k) = \set{\alpha_k, \alpha_1}$,
  then merge those $k$ components into a new component $C$ in such a way that
  each of the $\alpha_i$'s becomes two-sided in $C$.
  Note that $\lev(C) \geq \lev(C_1) + k$ and $\norm{C} = \sum_{i=1}^k \norm{C_i} + 2$.
\item \label{rul:int} \textbf{Introduce.} Introduce a new component $C$ being a single edge.
  Note that $\lev(C) = 0$ and $\norm{C} = 2$.
\end{enumerate}
This concludes the description of a single phase of the strategy.
Now we present two simple invariants kept by the strategy.

\textbf{Invariant 1.} $\norm{C} \leq 2^{\frac{\lev(C)}{2}+2}-2$ for every component $C$ after each phase of the strategy.

The statement vacuously holds at the beginning of the on-line game.
We show that the invariant holds from phase to phase.
Clearly, it suffices to argue that a component $C$ created in a considered phase satisfies the statement.
If $C$ is a product of $k\geq2$ components $C_1,\ldots,C_k$ by one of the merging rules~\ref{rul:dif}-\ref{rul:cyc} then
\begin{align*}
\norm{C}&=\sum_{i=1}^k\norm{C_i}+2\leq \sum_{i=1}^k\left( 2^{\frac{\lev(C_i)}{2}+2}-2\right) +2\\
&\leq k\cdot2^{\frac{\lev(C)-k}{2}+2}-2(k-1) = 2^{\frac{\lev(C)}{2} -\frac{k}{2} + \log k +2}-2(k-1)\\
&\leq 2^{\frac{\lev(C)}{2}+2}-2.
\end{align*}
If $C$ is a component introduced by Rule~\ref{rul:int}, then invariant holds trivially.

\textbf{Invariant 2.} After each phase, none of the merging rules~\ref{rul:dif}-\ref{rul:cyc} applies to the set of all available components without the component created in the last phase.
\TODO{Previous sentence is so bad that I want to cry!}

The statement trivially holds after the first phase.
Let $C_k$ be the component created in the $k$-th phase.
If $C_k$ is introduced by Rule~\ref{rul:int}, then rules~\ref{rul:dif}-\ref{rul:cyc} do not apply in the graph without $C_k$.
If $C_k$ is merged from some components available after phase $k-1$, then by induction hypothesis $C_{k-1}$ is one of the merged components.
Thus, the set of components after phase $k$ without $C_k$ is a subset of the set of components after phase $k-1$ without $C_{k-1}$.
Therefore, the statement follows by induction hypothesis.

For the further analysis we need one more observation.
Suppose that Rule~\ref{rul:cyc} does not apply to the current set of components and there is a set of colors $T$ and $p$ distinct components $C_1,\ldots,C_p$ with $\ts(C_i)=T$ for all $i\in\set{1,\ldots,p}$.
We claim that $\norm{\bigcup_{i=1}^p\sel(C_i)}\geq p+1$.
Indeed, consider a multigraph $M$ with vertex set $\bigcup_{i=1}^p\sel(C_i)$ and $p$ edges formed by all pairs of colors on selected vertices in the components (i.e. for each $C_i$ there is an edge connecting colors in $\sel(C_i)$).
If $\norm{\bigcup_{i=1}^p \sel(C_i)}\leq p$ then there is a cycle in $M$.
Say that a cycle is defined by $q\geq2$ edges originating from components $C_{i_1},\ldots,C_{i_q}$.
Then Rule~\ref{rul:cyc} can be applied to components $C_{i_1},\ldots,C_{i_q}$, contradicting our assumption.

Fix $c\geq3$ and  consider the situation in a game after a number of phases.
Suppose that Algorithm used so far fewer than $c$ colors.
We are going to argue that Presenter introduced fewer than $\brac{8+7\sqrt{2}}2^\frac{c}{2}$ vertices and this will conclude the proof.
Clearly, for any available component $C$ we have $\lev(C)\leq c-3$ and therefore by Invariant 1 we have $\norm{C} \leq 2^{\frac{c-3}{2}+2}-2$.
Let $\calC$ be the set of all components but the one created in the last phase.
By Invariant 2 none of the merging rules~\ref{rul:dif}-\ref{rul:cyc} applies to $\calC$.

Fix $\ell\in\set{0,\ldots,c-3}$ and let $\calC_\ell$ be the set of $C\in \calC$ with $\lev(C)=\ell$.
Now, we want to bound the number of components in $\calC_\ell$.
Let $\{T_1,\ldots,T_m\}$ be the set of values of $\ts(C)$ attained for $C\in\calC_\ell$.
Let $p_i$ be the number of components $C\in\calC_\ell$ with $\ts(C)=T_i$ and let $S_i=\bigcup_{\substack{C\in\calC_\ell,\\\ts(C)=T_i}}\sel(C)$.
As Rule~\ref{rul:dif} can not be applied $\norm{T_i-T_j}=1$ for all distinct $i,j\in\set{1,\ldots,m}$.
In particular, $\norm{T_1\cap S_i}\leq 1$ for all $i\in\set{2,\ldots,m}$.
As Rule~\ref{rul:sim} can not be applied $S_i\cap S_j=\emptyset$ for all distinct $i,j\in\set{1,\ldots,m}$.
As Rule~\ref{rul:cyc} can not be applied $\norm{S_i}\geq p_i+1$ for all $i\in\set{1,\ldots,m}$.
All this give a lower bound on the number of colors in $\bigcup_{i=1}^m S_i \cup T_1$.
\begin{align*}
c-1\geq \norm{\bigcup_{i=1}^m S_i \cup T_1} &= \sum_{i=1}^m \norm{S_i} + \norm{T_1} - \sum_{i=1}^m\norm{T_1 - S_i}\\
&\geq \sum_{i=1}^m(p_i+1)+\ell-(m-1).
\end{align*}
Thus, $\norm{\calC_\ell}=\sum_{i=1}^m p_i \leq c-\ell-2$.

We define an auxiliary function $f(i)=\sum_{j=0}^n (i-j)2^{\frac{j}{2}}$ and observe an easy bound 
$f(i) < \brac{4+3 \sqrt{2}} 2^{\frac{i}{2}}$.
Using Invariant~1 and the bound on the number of components with any particular level, we get the following bound on the total number of vertices within components in $\calC$:
\begin{align*}
 \sum_{\ell=0}^{c-3}\norm{\calC_\ell}\cdot\brac{2^{\frac{\ell}{2}+2}-2}&\leq \sum_{\ell=0}^{c-3}(c-\ell-2)\cdot\brac{2^{\frac{\ell}{2}+2}-2}\\
  &< \sum_{\ell=0}^{c-2}(c-2-\ell)\cdot4\cdot2^{\frac{\ell}{2}} = 4f(c-2)\textrm{.}
\end{align*}


To finish the proof we sum up the upper bounds on the size of the last produced component and the total size of all remaining components, that is
\[
  2^{\frac{c-3}{2}+2}-2 + 4\brac{4+3\sqrt{2}} 2^{\frac{c-2}{2}} < \brac{8+7\sqrt{2}}2^\frac{c}{2}\textrm{.}
\]
\end{proof}

\section{The odd-girth}\label{sec:oddgirth}
In this section we consider classes of graphs with odd-girth bounded from below.
The high value of odd-girth of a graph implies that the graph is locally bipartite.
However, it seems hard to exploit this property in an on-line framework.
The only known on-line algorithms, that can use large odd-girth, 
is the one by Lov\'{a}sz et al.~\cite{LovaszST89} using $O(\frac{n}{\log\log n})$ colors for graphs of girth at least $4$ and of size $n$,
and the one by Kierstead~\cite{Kierstead98} using $\O{n^\frac{1}{2}}$ colors on graphs of size $n$ with odd-girth at least $7$.
We present constructions that prove lower bounds for these problems for odd-girth at least $5$ and at least $7$.

\begin{theorem}[Diwan, Kenkre, Vishwanathan~\cite{DiwanKV13}]\label{thm:triangle-free}
There exists a strategy for Presenter that forces $\Om{n^\frac{1}{2}}$ colors on triangle-free graphs of size $n$.
\end{theorem}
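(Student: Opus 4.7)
The plan is for Presenter to build a triangle-free graph in phases while maintaining, as an invariant, a family of monochromatic independent sets $I_1, \ldots, I_K$ in the current graph---one per color $c_j$ used by Algorithm so far---whose union $\bigcup_j I_j$ is itself independent. The aim is to reach $K \geq c$ using a total of $\O{c^2}$ vertices, which yields $c = \Om{\sqrt n}$.

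Initialization consists of presenting roughly $c^2$ isolated vertices; assuming for contradiction that Algorithm uses at most $c - 1$ colors throughout the game, the pigeonhole principle yields an initial monochromatic independent set $I_1$ of size at least $c$. In each main phase that follows, Presenter picks one representative $u_j \in I_j$ from every tracked set and presents a batch of roughly $c$ new vertices, all sharing the common neighborhood $\{u_1, \ldots, u_K\}$. Since this neighborhood is independent, no triangle is created, and every new vertex must take a color outside $\{c_1, \ldots, c_K\}$, so each distinct new color within the batch contributes a new monochromatic class to the family. Each $u_j$ is then removed from $I_j$ to preserve the independence of the union. The design choice of sharing a single $K$-tuple of representatives across the whole batch is crucial: each $I_j$ shrinks by only one vertex per phase regardless of the batch size, so the sizes $|I_j|$ can be managed over the $\Om{c}$ phases needed.

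The main obstacle is that Algorithm's adversarial coloring can split a batch into many small new classes. If Algorithm uses $t$ distinct new colors in one batch, the smallest of the $t$ resulting classes may contain only a single vertex, which Presenter would deplete in the next phase and get stuck. I would resolve this by interleaving a \emph{reinforcement} step whenever some $I_j$ becomes too small: Presenter introduces a batch of vertices adjacent to the representatives of every $I_l$ with $l \neq j$. Then Algorithm must either reuse color $c_j$ (enlarging $I_j$) or commit to a brand-new color (advancing $K$), and both outcomes count as progress. A careful amortized accounting across main and reinforcement phases, parameterized by the number of new colors Algorithm elects to spend per batch, should yield the overall $\O{c^2}$ bound on the number of vertices.
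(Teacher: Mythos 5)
Your overall plan---maintain one monochromatic class per forced color, with the union of the classes independent, and force new colors by presenting common neighborhoods of a transversal of representatives---is a genuinely different bookkeeping from the paper's, and I believe it can be completed. But as written the proof has a real gap exactly where you wave your hands: the amortized accounting is not a routine afterthought here, it is the entire difficulty of this route, and with the parameters you state it does not obviously close. Concretely: a reinforcement batch aimed at $I_j$ consumes one representative from \emph{every other} class, so reinforcements can cascade (rescuing one class pushes several others below threshold). If $P$ denotes the total number of phases and $b$ the batch size, each class loses at most one vertex per phase and gains at least (roughly) $b$ per successful reinforcement, so the number of reinforcements is about $cP/b$ and the natural recursion is $P \leq O(c) + cP/b$. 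This only closes when $b$ exceeds $c$ by a constant factor (say $b \geq 2c$); with batches of size ``roughly $c$'' the inequality is vacuous, because a size-$c$ reinforcement feeds one class for $c$ phases while starving $c-1$ others by one each. You also need to handle the reinforcement outcome in which Algorithm answers entirely with colors outside the family, so the target class gains nothing; this is fine (each such event adds a tracked class, and that can happen at most $c$ times), but it has to be said, since otherwise a single class could demand unboundedly many reinforcements. None of this is fatal---fixing $b = \Theta(c)$ with the right constant and splitting reinforcements into ``mostly $c_j$'' versus ``at least one new tracked color'' yields $P = O(c)$ and $O(c^2)$ vertices---but the proposal does not contain this argument, only the assertion that it ``should'' work.

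It is worth seeing how the paper's table sidesteps all of this. There, a vertex of color $i$ is filed into the rightmost empty cell of row $i$ of a $c \times c$ table, and phase $k$ presents vertices adjacent to the whole $k$-th column $I_k$. A vertex can only land in column $k+1$ after its row already occupies columns $c, c-1, \ldots, k+2$; so every color appearing in $I_k$ automatically comes with a large ``reserve,'' the sets $I_k$ grow by at least one per phase for free, and the total number of vertices is trivially at most the number of cells, $c^2$. No reinforcement, no amortization, no tuning of batch sizes. Your approach buys a more transparent invariant (explicit monochromatic classes with independent union) at the price of exactly the bookkeeping you left unfinished; if you want to pursue it, write out the potential-function argument and fix the batch size to at least $2c$.
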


\begin{proof}
We give a strategy for Presenter that forces Algorithm to use $c$ different colors.
An auxiliary structure used by Presenter during the game is a table with $c$ rows and $c$ columns.
Each cell of the table is initially empty, but over the time Presenter puts vertices into the table.
There will be at most one vertex in each cell.
Each time Algorithm colors a vertex with color $i$, the vertex is put into the last empty cell in the $i$-th row,
e.g.\ the first vertex colored with $i$ ends up in the cell in the $i$-th row and $c$-th column.
Anytime Algorithm uses $c$ different colors Presenter succeeds and the construction is complete.

The strategy is divided into $c$ phases numbered from $0$ to $c-1$.
Let $I_0=\emptyset$ and for $k>0$ let $I_k$ be the set of vertices in the $k$-th column at the beginning of phase $k$.
Now, as long as there is no vertex in the $(k+1)$-th column with a color different than all the colors of the vertices in $I_k$,
Presenter introduces new vertices adjacent to all vertices in $I_k$ (and no other).
The phase ends when some vertex $v$ is put in the $(k+1)$-th column.
Clearly, each phase will end as Algorithm must use colors different than colors used on $I_k$ for all vertices presented in the $k$-th phase and the space in the table where these vertices are stored is limited.
Observe also, that the vertices presented in the $k$-th phase are never put into the $k$-th column.
Hence, each edge connects vertices in different columns and each column in the table forms an independent set.
As Presenter introduces only vertices with neighborhood contained within one column the constructed graph is triangle-free.
Observe also that $I_{k+1}$ is strictly larger than $I_{k}$ and therefore $\norm{I_{k}}\geq k$.
Thus, after the completion of the $(c-1)$-th phase each cell of the $c$-th column is filled and therefore Algorithm has already used $c$ colors.
As there are only $c^2$ cells in the table, Presenter introduced at most that many vertices.
\end{proof}

Note that when Algorithm uses First-Fit strategy, the graphs constructed by Presenter are exactly shift graphs (a well known class of triangle-free graphs with chromatic number logarithmic in terms of their size).
In the same vein the next strategy, for graphs with odd-girth at least $7$, is inspired by the construction of double-shift graphs.

\begin{proof}[Proof of Theorem~\ref{thm:no-C3-and-C5}]
We describe a strategy for Presenter that forces Algorithm to use $c$ different colors.
This time an auxiliary structure used by Presenter is a table with $c$ rows and $3c$ columns.
Each cell of the table is initially empty, but over the time Presenter puts vertices into the table.
There will be at most $3c$ vertices in each cell.
All vertices put in the $i$-th row will be of color $i$.
Every vertex in the table will have assigned a non-negative weight.
If at some point Algorithm uses $c$ different colors, Presenter succeeds and the construction is complete.

Color $i$ is \emph{available} for the $j$-th column,
 or the cell in the $i$-th row and $j$-th column is available,
when there are fewer than $3c$ vertices in this cell.
Otherwise, the color, or the cell is \emph{blocked}.

The strategy is divided into $3c$ phases numbered from $0$ to $3c-1$.
In the $k$-th phase, Presenter selects $3c$ groups of vertices that are already in the table.
If $k=0$ or there are no blocked cells in the $k$-th column then all the groups are empty.
Otherwise, Presenter splits vertices of blocked colors for the $k$-th column in such a way that each group contains a vertex of each blocked color and no other vertices.
Now for each group $R$, Presenter plays according to the following rules.

\Rule1 If there is a color available for the $k$-th column but blocked for all the columns to the right of the $k$-th column (i.e. for columns $k+1, \ldots, 3c-1$), then the whole phase is finished and Presenter starts the next phase.
Phases that ended for this reason are called \emph{broken}.
Clearly, the number of broken phases is bounded by the number of colors, that is by $c$.

Otherwise, Presenter introduces an independent set $F$ of $3c\brac{1+\ceil{\ln3c}}$ new vertices adjacent to all the vertices in $R$.
Set $F$ is called a \emph{fan}.
Now, Presenter investigates the possibilities of putting some of the vertices in $F$ into the table but he restricts himself to put them only into one column which is to the right of the $k$-th column.
We are going to use this property in the proof that the constructed graph contains neither $C_3$ nor $C_5$ as a subgraph.

\Rule2 If there is a cell in the table in a column to the right of the $k$-th column, say the cell in the $i$-th row and $j$-th column, such that there are $m<3c$ vertices in the cell (in particular the cell is available) and there are at least $3c-m$ vertices in $F$ colored by Algorithm with $i$, then Presenter puts $3c-m$ vertices in $F$ colored with $i$ into the cell.
From now on this cell is blocked.
All vertices put into the table by this rule receive weight $0$.
All the other vertices in $F$ (with color different than $i$) are \emph{discarded} and will not be used as neighbors for vertices introduced in the future.

\Rule3 If there is no such cell (i.e. we cannot apply Rule 2), then we call $F$ an \emph{interesting fan}.
Let $t_i$, for $i\in\set{1,\ldots,c}$ be the number of columns to the right of the $k$-th column for which color $i$ is available.
Consider a bipartite graph with one part formed by $3c-1-k$ columns to the right of the $k$-th column and the second part formed by $3c\brac{1+\ceil{\ln3c}}$ vertices in $F$.
We put an edge in the graph between the $j$-th column and a vertex $v\in F$ of color $i$ if color $i$ is available for the $j$-th column.
To vertex $v\in F$ of color $i$ we assigned weight $\frac{3c}{t_i}$.
All its incident edges also get weight $\frac{3c}{t_i}$.
The total weight of all the edges in the graph is
\[
\sum_{i\in\set{1,\ldots,c}} \sum_{\substack{v\in F\\\textrm{$v$ is colored with $i$}}} t_i\cdot\frac{3c}{t_i}=\norm{F}\cdot 3c.
\]
This means that for some column $j>k$, the total weight of incident edges is at least $\norm{F}\cdot \frac{3c}{3c-1-k}\geq \norm{F}$.
Presenter puts all the vertices in $F$ colored with available colors for the $j$-th column into the $j$-th column.
The remaining vertices in $F$ are discarded.
Note that after this step all the cells in the $j$-th column still have at most $3c$ vertices as otherwise Presenter would use Rule 2.
Note also that the total weight assigned to the vertices put into the table is at least $\norm{F}$.
This finishes the description of the strategy for Presenter.

Note that there are at most $3c$ phases of the game, and that in every phase at most $3c$ fans of size $3c\brac{1+\ceil{\ln3c}}$ each, are presented.
This gives no more than $27 c^3 \brac{1+\ceil{\ln3c}}$ vertices in total.
We claim that before the end of the last phase Algorithm has to use $c$ different colors.
Suppose it does not.
At most $c$ of the phases are broken.
During remaining phases Presenter introduces at least $2c\cdot3c$ fans.
As there are $c\cdot3c$ cells in the table, and each cell can be blocked only once, at most $3 c^2$ fans are used to block some cell.
Thus, we have at least $3c^2$ interesting fans produced in the construction.
The total number of vertices in the interesting fans is at least $3c^2\cdot 3c\brac{1+\ceil{\ln3c}}$.
Recall that the total weight of vertices put into the table during the construction upper bounds the total size of all the interesting fans.
We conclude that the weight of all the vertices put into the table during the construction is at least $9c^3\brac{1+\ceil{\ln3c}}$.
Observe that for each row $i$ of the table and for each $t=1,\ldots,3c$ the number of vertices with weight greater or equal to $\frac{3c}{t}$ is at most $3c\cdot t$.
Indeed, let $v$ be the first vertex put into the $i$-th row with weight at least $\frac{3c}{t}$.
This means that there are at most $t$ available cells in the $i$-th row at the time when $v$ is put into the table.
Since in each cell there are at most $3c$ vertices we know that at most $3c\cdot t$ more vertices may end up in the $i$-th row.
Thus, there are at most $3c$ vertices of weight $\frac{3c}{1}$ and at most $6c$ vertices of weight at least $\frac{3c}{2}$ and so on, and it is easy to see that the total weight of the vertices in any row is at most
\[
  \sum_{t=1}^{3c} \frac{3c}{t}\cdot 3c = 9c^2\sum_{t=1}^{3c}\frac{1}{t} < 9c^2 \brac{1+\ln3c}\textrm{.}
\]

If the game does not end before phase $3c$ then vertices of total weight at least $9c^3\brac{1+\ceil{\ln3c}}$ are put into the table.
However, total weight of vertices in any row is strictly smaller than $9c^2 \brac{1+\ln3c}$.
Since there are $c$ rows, we get a contradiction.

During the game Presenter introduced at most $27c^3\brac{1+\ceil{\ln3c}}$ vertices and forced Algorithm to use $c$ colors.
Inverting the function, we get that Presenter forces Algorithm to use $\Om{\frac{n}{\log n}^{\frac{1}{3}}}$ colors on graphs of size $n$.

To finish the proof, we need to argue that the constructed graph contains neither $C_3$ nor $C_5$ as a subgraph.
For $C_3$ observe that any vertex $v$ introduced by Presenter has all the neighbors contained within one column (at the moment of introduction).
Moreover, the vertex $v$ itself ends up in the column to the right of the column of its neighbors or is discarded.
This implies that vertices in a single column form an independent set.
Since the neighborhood of each vertex at the moment of introduction is an independent set the whole graph is triangle-free.

Now, assume to the contrary that $C_5$ is contained in the constructed graph.
As the graph does not contain $C_3$, the copy of $C_5$ is an induced subgraph on some vertices $v_0,v_1,v_2,v_3,v_4$.
We can assume, that all these vertices are put into the table.
If it is otherwise, then we can exchange a discarded vertex $u$ to any of the non discarded vertices, say $w$, in the same fan.
All neighbors of $u$ are also neighbors of $w$ and we still get a $C_5$.
We can choose $v_2$ to be the vertex in the left most column of the five possibilities.
By the construction, we know that $v_1$ and $v_3$ are introduced in a single fan of vertices adjacent to the group of vertices containing $v_2$.
This implies that $v_1$ and $v_3$ are in the same column.
All the neighbors of $v_1$ in the columns to the left of $v_1$ are in the same group (and column) as $v_2$.
The same holds for $v_3$.
If $v_0$ and $v_4$ were both in columns to the left of the column of $v_1$ and $v_3$ then they are in the same column as $v_2$ and $v_0$ is not adjacent to $v_4$.
So at least one of $v_0$ or $v_4$ is in a column to the right of $v_1$.
Say it is $v_0$.
If $v_4$ is in a column to the left of $v_0$ then it is in the same column as both $v_1$ and $v_3$ and $v_4$ is not adjacent to $v_3$.
If $v_4$ is in a column to the right of $v_0$ then both $v_0$ and $v_3$ are in different columns to the left of $v_4$, and one of them is not adjacent to $v_4$.
\end{proof}


\bibliographystyle{plain}
\bibliography{paper}
\end{document}